\documentclass[12pt]{article}
\usepackage{amsmath,amsfonts,amscd,amssymb,latexsym}
\usepackage[cp866]{inputenc}
\usepackage{srcltx}
\usepackage{amsthm}
\theoremstyle{plain}
 \setlength{\headheight}{0mm}
\setlength{\headsep}{0mm} \setlength{\topmargin}{-10mm}
\setlength{\oddsidemargin}{-10mm} \setlength{\textwidth}{165mm}
\setlength{\textheight}{240mm}
\theoremstyle{plain}
\newtheorem{Th}{Theorem}
\newtheorem{Lem}[Th]{Lemma}
\newtheorem{pro}[Th]{Proposition}
\newtheorem{cor}[Th]{Corollary}
\theoremstyle{definition}

\theoremstyle{remark}
\newtheorem*{Rem}{Remark}

\newcommand{\0}{_{\bar 0}}
\newcommand{\1}{_{\bar 1}}
\def\N{\mathbb N}
\def\qed{\hfill \rule{2.25mm}{2.25mm}}

\begin{document}
\sloppy \noindent \vspace{2 in}

    \begin{center}
\noindent {\LARGE {\bf The universal conservative superalgebra
    \footnote{The authors were supported by FAPESP 16/16445-0,
    17/15437-6;  RFBR 18-31-20004, 16-31-50017. }}}
\vspace{.5cm}

\noindent {\bf Ivan Kaygorodov\footnote{Universidade Federal do ABC,
 CMCC, Santo Andr\'{e}, Brazil, e-mail: kaygorodov.ivan@gmail.com},
   Yury Popov\footnote[8]{Universidade Estadual de Campinas, Campinas,
   Brazil, e-mail: yuri.ppv@gmail.com},
   Alexandr Pozhidaev\footnote[7]{Sobolev Institute of Mathematics,
   pr. Koptyuga 4, Novosibirsk, Russia,

   Department of Mathematics and Mechanics, Novosibirsk State University,
   Novosibirsk, Russia,

   e-mail:
   app@math.nsc.ru}}
\end{center}

\vspace{.5cm}

\parbox[c]{16cm}{{\bf Abstract.}
\it We introduce the class of conservative superalgebras, in particular, the superalgebra  $\mathcal{U}(V)$ of bilinear operations on a superspace $V.$ Moreover, we show that each conservative superalgebra modulo its maximal Jacobian ideal is embedded into $\mathcal{U}(V)$ for a  certain superspace $V.$
}
\\*[1mm]

\parbox[c]{16cm}{{\bf  Keywords:} \it superalgebra, Kantor product, conservative algebra.}\\

\parbox[c]{16cm}{{\bf MSC2010:}    17A30, 17D99}

\vspace{.1cm}

    \section{Introduction}

I.\,Kantor introduced the class of conservative algebras in \cite{Kantor72}.
This class includes some well-known classes of algebras,
such as associative, Jordan, Lie, Leibniz, and Zinbiel algebras \cite{KLP}.

To define conservative algebras  we firstly introduce some notations. Let $V$ be a vector space over a field $\mathbb{F},$ let $\varphi$ be a linear map on $V$, and let $B$ be a bilinear map on $V$ (i.~e., an algebra). Then we can consider a product of $\varphi$ and $B$, which is a bilinear map $[\varphi,B]$ on $V$ given by
\begin{equation}
\label{lin_bilin_act}
[\varphi,B](x,y)= \varphi(B(x,y))-  B(\varphi(x),y)-B(x,\varphi(y)).
\end{equation}
Note that this product measures how far is $\varphi$ from being a
derivation of the algebra $B$. The relation (\ref{lin_bilin_act})
may be also considered as a transformation of a bilinear operator
$B$ under the action of an infinitesimal transformation $x\mapsto
x+t\varphi(x)$. Indeed, the right-hand side of (\ref{lin_bilin_act})
is the coefficient at the first degree of $t$ in the series
$e^{\varphi t}(B(e^{-\varphi t}(x),e^{-\varphi t}(y)))$. Thus,
$\{[L_a,B]:a\in V\} $ is the set of all algebras which arise from
the initial algebra $B$ by the action of left shifts $L_a,\ a\in V$.
Thus, the definition of a conservative algebra given below says that
this set (for every conservative algebra $B$) is transformed into
itself under other actions of the left shifts $L_a,\,a\in
V$.\smallskip

An algebra $A$ with a multiplication $\cdot$ is called a ({\it left})
{\it conservative algebra}  if there exists an  algebra structure $* $
 (called an {\it associated algebra}) on the underlying space of $A$ such that
\[[L_b,[L_a, \cdot]]=- [L_{a*b},\cdot];\]
here, as usual,  $L_a$ stands for the operator of left
multiplication by $a$: $L_a(x):=a\cdot x:=ax$ for all $x\in  A$ (in what follows, the symbol $:=$ denotes an equality by definition; and $\langle
\Upsilon \rangle$ stands for the linear span of a set $\Upsilon$ over the ground field). Replacing the left multiplications with the right multiplications and modifying correspondingly the above relation, we can define right conservative algebras and obtain a similar theory. The associative and Lie algebras give obvious examples of conservative algebras (see it further for the supercase).

In the theory of conservative algebras, the
algebra $\mathcal{U}(n)$, which was introduced in \cite{Kantor90},
 is of great importance. Let $V_n$ be an $n$-dimensional
 vector space over $\mathbb{F}.$ The space of the algebra $\mathcal{U}(n)$ is the space of all bilinear operations on $V_n$ (further we identify a bilinear operation $A \in \mathcal{U}(n)$ with the algebra structure that it defines on $V_n$, and do the same in the supercase). To define a product $\bigtriangleup$ on $\mathcal{U}(n)$ we fix a nonzero vector $u \in V_n.$ Then for $A, B \in \mathcal{U}(n)$ we put
\begin{eqnarray}\label{Kanpro}
(A \bigtriangleup_u B)(x,y) = [L_u^A,B](x,y) =
A(u,B(x,y))-B(A(u,x),y)-B(x,A(u,y)),
\end{eqnarray}
where $L_u^A: x \mapsto A(u,x)$ is the left multiplication with respect to $A.$ This product is called the \textit{Kantor product of $A$ and $B$.}
One can easily check that the algebras obtained by different choices of nonzero $u \in V_n$ are isomorphic (see the proof for the superalgebra case below). Note also that  (\ref{lin_bilin_act}) gives a Lie action of $\mathfrak{gl}_n$ on $\mathcal{U}(n),$ since it coincides with the natural action of $\mathfrak{gl}_n$ on $\mathcal{U}(n) = V_n^*\otimes V_n^* \otimes V_n.$


One can verify that the algebra $\mathcal{U}(n)$ is conservative with the associated multiplication $\bigtriangledown$ given, for example, by $A\bigtriangledown_u B(x,y) = -B(u,A(x,y))$ (there are other associated multiplications as well). In the theory of conservative algebras, the algebra $\mathcal{U}(n)$ plays a role analogous to the role of $\mathfrak{gl}_n$ in the theory of Lie algebras, that is, every finite-dimensional conservative algebra (modulo its maximal Jacobian ideal) may be embedded into $\mathcal{U}(n)$ for some $n.$ Some properties of the algebra $\mathcal{U}(2)$ were studied in \cite{KLP,KV15}.

The Kantor product of a multiplication by itself is
called its {\it Kantor square.} It gives a map $K$ from any variety $\mathcal{V}$ of algebras to some class of algebras $K(\mathcal{V}).$
 The Kantor squares of multiplications satisfying certain
 conditions (such as the associativity, the Jacobi identity and others)
 were studied in \cite{Kg17}.


The main aim of this paper is to introduce the conservative
superalgebra $\mathcal{U}(n,m),$ which is the super-counterpart of the algebra
$\mathcal{U}(n)$, and prove that every finite-dimensional conservative
superalgebra is embedded  (modulo its maximal Jacobian ideal) into
$\mathcal{U}(n,m)$ for some non-negative integer $n,m.$

\section{The Conservative Superalgebras}
 In this section we introduce the class of conservative superalgebras, whose definition is a complete analogue of the notion of a conservative algebra given in \cite{Kantor72, Kantor90}, and we also prove some of their
 elementary properties, which are used further.

\subsection{Notation and Main Definitions} As usual, all (super)spaces and
 (super)algebras are considered over
a field $\mathbb{F}.$ Algebras and superalgebras are in general
assumed to be nonassociative, noncommutative, and without unity. Let
$A = A_{\bar{0}} \oplus A_{\bar{1}}$ be a superalgebra
 (i.~e., it is a ${\mathbb Z}_2$-graded algebra:
 $A_{\bar{i}} A_{\bar{j}}\subseteq A_{\overline{i+j}}$, the elements in
 $A_{\bar{0}}$ are {\it even}, and the elements in $A_{\bar{1}}$ are {\it odd}),
$(-1)^{xy}:=(-1)^{p(x)p(y)}$, where $p(x)$ is the parity of $x$,
that is,   $p(x)=i$ if $x\in A_{\bar{i}}$. The elements in
 $A_{\bar{i}}$
 are {\it homogeneous}. We use the same notation
 for a homogeneous operator $\varphi$:
$(-1)^{x\varphi}:=(-1)^{p(x)p(\varphi)}$ (here $p(\varphi)$ is the
parity of $\varphi$:
 $\varphi(A_{\bar{i}})\subseteq A_{\overline{i+p(\varphi)}}$ ),
 and so on.  In what follows, if the parity of
 an element (operator) appears in a formula, then this element
 (operator) is assumed to be homogeneous. Sometimes we simply write
 ``subspace'' instead of ``subsuperspace'', ``subalgebra''
 instead of ``subsuperalgebra'', and so on.
  The ideals are assumed to be homogeneous, i.~e., an ideal $I$
  contains with
  every element $x=x_{\bar{0}}+x_{\bar{1}}\in I$
  its even and odd components
  $x_{\bar{i}}\in I\cap A_{\bar{i}},\ i=1,2$.

Following \cite{Kantor72, Kantor90},
 we define some supercommutators. Let
 $V$ be a vector superspace, let $A$ be a linear operator on $V,$
 and let $B$ and $C$ be bilinear operators on $V.$ For all $x,y,z \in V,$ put
\begin{gather}
\label{oper_comm1} [A,x]=A(x), \ [B,x](y)=B(x,y),\\
\label{oper_comm2} [A,B](x,y)=
A(B(x,y))- (-1)^{BA} B(A(x),y)-(-1)^{A(B+x)}B(x,A(y)),\\
 [B,C](x,y,z)=B(C(x,y),z)+ (-1)^{xC}B(x,C(y,z))+ (-1)^{y(C+x)}B(y,C(x,z)) \nonumber\\
-(-1)^{BC}C(B(x,y),z)- (-1)^{B(C+x)}C(x,B(y,z))-(-1)^{CB+xy+By}C(y,B(x,z)). \nonumber
\end{gather}
By definition we put $[X,Y]=-(-1)^{XY}[Y,X]$ for all $X,Y\in \{a,A,B\}$.

Denote by $\mathcal{U}(V)$ the space of all bilinear operations on
$V.$ Then one can verify that (\ref{oper_comm2}) defines an action
of $\mathfrak{gl}(V)$ on $\mathcal{U}(V).$

Let $M(x,y)=xy$ be an even bilinear operation that defines a
superalgebra structure on $V.$ Let $L_a$ be as above. We say that
$M$ is a {\it conservative superalgebra} if there exists an even
superalgebra $M^*(x,y) = x*y$ (called the {\it associated
superalgebra}) on the same space such that
\begin{equation}
\label{cons_oper}
[L_b, [L_a,M]]=- (-1)^{ab}[L_{a*b},M]
\end{equation}
for all $a,b\in V$.

The above relation can be written explicitly as an identity of
degree 4 with respect to the multiplications $M$ and $M^*:$
\begin{multline*}
b(a(xy))- b((ax)y)- (-1)^{ax} b(x(ay))-(-1)^{ab}a((bx)y) +(-1)^{ab}(a(bx))y+\\
(-1)^{ax}(bx)(ay)-(-1)^{b(a+x)}a(x(by))+
(-1)^{b(a+x)}(ax)(by)+(-1)^{x(a+b)+ab} x(a(by))=\\
-(-1)^{ab} (a*b)(xy)+(-1)^{ab}((a*b)x)y +(-1)^{x(a+b)+ab}x((a*b)y).
\end{multline*}

One may also use the general approach to define the conservative
superalgebras. Namely, let $\Gamma:=\Gamma\0\oplus\Gamma\1$
  be the Grassmann superalgebra in generators
  $1,\ \xi_i,\ i\in \N, \Gamma\0=\left<1,\xi_{i_1}\ldots \xi_{i_{2k}}:k\in
  \N\}\right>$, $\Gamma\1=\left<\xi_{i_1}\ldots \xi_{i_{2k-1}}:k\in
  \N\}\right>$. Let ${\cal A}:={\cal A}\0\oplus{\cal A}\1$ be a
  superalgebra and $\cdot$ and $*$ be two products on $\mathcal{A}.$
  Consider its {\it Grassmann enveloping}
 $\Gamma({\cal A}):=
 ({\cal A}\0\otimes\Gamma\0)\oplus({\cal A}\1\otimes\Gamma\1)$, and  extend
 the products $\cdot$ and $*$ to $\Gamma({\cal A})$ as follows:
 \[(a\otimes f)\cdot(b\otimes g)=(-1)^{ab}ab\otimes fg,\]
 \[(a\otimes f)*(b\otimes g)=(-1)^{ab}a*b\otimes fg\]
 for all homogeneous $a,b\in {\cal A},f,g\in \Gamma\ (p(a)=p(f),\,
 p(b)=p(g)).$ Then $(\mathcal{A},\cdot)$ is conservative with an
 associated multiplication $*$ if and only if $(\Gamma(\mathcal{A}),\cdot)$
 is a conservative algebra with an associated multiplication $*.$

\subsection{Examples} The Lie superalgebras give obvious examples of conservative
 superalgebras.
 Indeed, let $L$ be a Lie superalgebra with a product $M$. Then
   the Jacobi identity and the anticommutativity imply that
   $[L_a,M]=0$ for all $a\in L$. Thus, the left and right-hand sides
   of (\ref{cons_oper}) are zero for arbitrary product $M^*$ on $L$.
   As another example we have associative superalgebras.
   In this case
   \[[L_a,M](x,y)=-(-1)^{ax}xay;\quad [L_b,[L_a,M]](x,y)=(-1)^{b(a+x)}xaby,\]
    and (\ref{cons_oper}) holds with $x*y:=xy$.\smallskip


 A linear space $U$ with a bilinear operation
  $M:U\times U\mapsto U$ is called a {\it terminal algebra}
  provided that
  \begin{eqnarray}\label{termal}
[[[M,a],M],M] &=& 0
\end{eqnarray}
for every $a\in U$. Note that (\ref{termal}) is an identity of degree 4, therefore, the class of terminal algebras is a variety. The class of terminal algebras is vast: it includes, for example, Jordan algebras, Lie algebras and (left) Leibniz algebras. In \cite{Kantor66, Kantor72} it was shown that the commutative algebras satisfying
 (\ref{termal}) are Jordan algebras. Assuming that in (\ref{termal}) we
 have the supercommutators, we arrive at the definition of
 terminal superalgebra. Passing to the supercase we see that
 the commutative superalgebras satisfying
  (\ref{termal}) are Jordan superalgebras.

The following result can be obtained by a direct computation:

\begin{pro} Let $\operatorname{char}\mathbb{F} \neq 3.$ An algebra $M$ is terminal if and only if it is conservative
 and the multiplication in the associated superalgebra $M^*$ can be defined by
\[M^*(x,y)=\frac{2}{3}xy+\frac{1}{3}yx.\]
\end{pro}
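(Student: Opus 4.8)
The plan is to reduce the super-statement to the case of ordinary algebras and then solve a small linear system. Both conditions---terminality (\ref{termal}) and conservativity (\ref{cons_oper}) with the prescribed associated product---are expressed by multilinear super-identities of degree $4$, and the formula $M^*(x,y)=\frac23 xy+\frac13 yx$ has the same shape in every parity. Hence, exactly as recorded after (\ref{cons_oper}) for conservativity, the equivalence holds for a superalgebra $M$ if and only if it holds for its Grassmann enveloping $\Gamma(M)$, which is an ordinary algebra. So I would first pass to $\Gamma(M)$ and prove the statement for ordinary algebras, where every sign $(-1)^{ab}$ equals $1$; this avoids all Koszul-sign bookkeeping.

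The two structural ingredients are as follows. First, a peeling identity expressing the trilinear bracket through bilinear ones, $[B,C](a,x,y)=[L_a^B,C](x,y)-[L_a^C,B](x,y)$, where $L_a^B(z)=B(a,z)$; this is verified by a one-line expansion of (\ref{oper_comm2}) and is the main computational lemma. Second, the Jacobi identity for the $\mathfrak{gl}(V)$-action of (\ref{oper_comm2}), namely $[[\varphi,\psi],M]=[\varphi,[\psi,M]]-[\psi,[\varphi,M]]$, which is available because (\ref{oper_comm2}) is a Lie action, as noted in the text.

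For the forward direction I assume $M$ terminal, so $[[L_a,M],M]=0$. Applying the peeling identity with $B=[L_a,M]$ and $C=M$, and noting that $L_b^{M}=L_b$ while $L_b^{[L_a,M]}=\varphi_{a,b}$ with $\varphi_{a,b}(z)=[L_a,M](b,z)=[L_a,L_b](z)-L_{ab}(z)$, terminality becomes $[L_b,[L_a,M]]=[\varphi_{a,b},M]$. Expanding $[\varphi_{a,b},M]=[[L_a,L_b],M]-[L_{ab},M]$ and rewriting the first summand by the Jacobi identity gives, with the abbreviation $P(a,b):=[L_b,[L_a,M]]$, the relation $2P(a,b)=P(b,a)-[L_{ab},M]$. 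Pairing this with its image under the swap $a\leftrightarrow b$ yields a $2\times2$ linear system in $P(a,b)$ and $P(b,a)$; solving it---which is precisely where $\operatorname{char}\mathbb{F}\neq3$ enters, the coefficient being $3$---produces $P(a,b)=-[L_{\frac23 ab+\frac13 ba},M]=-[L_{a*b},M]$, i.e. exactly (\ref{cons_oper}) for the stated $M^*$.

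For the converse I would substitute (\ref{cons_oper}) back into the peeling identity: writing $[[L_a,M],M](b,x,y)=[\varphi_{a,b},M]-P(a,b)=P(b,a)-2P(a,b)-[L_{ab},M]$ and replacing $P(a,b)$ and $P(b,a)$ by $-[L_{a*b},M]$ and $-[L_{b*a},M]$, the subscript elements combine as $-(b*a)+2(a*b)-ab$, and the coefficients $\frac23,\frac13$ make this vanish identically; thus the trilinear bracket is the zero operator for all $b,x,y$, which is terminality. I expect the main obstacle to be purely organizational: establishing the peeling identity with the correct signs in the super-setting and confirming that the degenerate coefficient $1+(-1)^{ab}$, which collapses when $a$ and $b$ are both odd, is harmlessly bypassed by the Grassmann reduction. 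Once the peeling identity and the Jacobi identity are in place, the remaining algebra is routine.
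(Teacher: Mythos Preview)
Your argument is correct. The paper itself offers no proof beyond the phrase ``can be obtained by a direct computation,'' so there is nothing to compare against at the level of method; your write-up is strictly more informative than what the paper supplies. Two small remarks. First, the proposition as stated is about ordinary algebras (note the absence of the sign $(-1)^{xy}$ in the formula for $M^*$, in contrast to the super version (\ref{terminal_assmult}) that appears later), so the Grassmann-envelope reduction you begin with is superfluous here---the super statement is handled separately in the paper via Theorem~\ref{alphabeta}. Second, and more substantively, the peeling identity $[B,C](a,x,y)=[L_a^B,C](x,y)-[L_a^C,B](x,y)$ together with the Jacobi relation for the $\mathfrak{gl}(V)$-action reduces the equivalence to the $2\times 2$ system $2P(a,b)=P(b,a)-[L_{ab},M]$ and its swap, whose solution and back-substitution you carried out correctly; the determinant $3$ is exactly where $\operatorname{char}\mathbb F\neq 3$ is used. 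This is a clean conceptual route and would serve well as the omitted ``direct computation.''
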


The following theorem provides us with different
 examples of conservative superalgebras.

\begin{Th}
\label{alphabeta}
Let ${\cal V}$ be a homogeneous variety of algebras. Assume that
there exist $\alpha, \beta \in \mathbb{F}$
 such that every ${\cal V}$-algebra is conservative with
  the associated multiplication given by the rule
 $a\,*\,b~=~\alpha ab~+~\beta ba.$ Then every ${\cal V}$-superalgebra is
conservative with the associated multiplication  $a\,*\,b~=~\alpha
ab + (-1)^{ab}\beta ba.$
\end{Th}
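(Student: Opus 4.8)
The plan is to reduce the superalgebra statement to the already-given ordinary-algebra hypothesis via the Grassmann envelope construction, which the excerpt has set up precisely for this purpose. The key observation is that the conservativity identity (\ref{cons_oper}) for a superalgebra $(\mathcal{A},\cdot)$ with associated product $*$ holds if and only if the ordinary algebra $(\Gamma(\mathcal{A}),\cdot)$ is conservative with associated product $*$, as stated at the end of the ``Main Definitions'' subsection. So I would not attack the degree-4 superidentity directly; instead I would transport the whole problem into the Grassmann envelope, apply the hypothesis there, and transport back.

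First I would fix an arbitrary $\mathcal{V}$-superalgebra $\mathcal{A}$. The crucial point is that $\mathcal{V}$ is a \emph{homogeneous} variety, so the defining identities of $\mathcal{V}$ hold in the Grassmann envelope $\Gamma(\mathcal{A})$; that is, $\Gamma(\mathcal{A})$ is an ordinary $\mathcal{V}$-algebra. By the hypothesis of the theorem, every $\mathcal{V}$-algebra is conservative with associated product $x\ast y = \alpha\, xy + \beta\, yx$. Applying this to $\Gamma(\mathcal{A})$, I conclude that $(\Gamma(\mathcal{A}),\cdot)$ is conservative with associated multiplication $\ast$ defined on elementary tensors by
\begin{equation*}
(a\otimes f)\ast(b\otimes g) = \alpha\,(a\otimes f)(b\otimes g) + \beta\,(b\otimes g)(a\otimes f).
\end{equation*}

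Next I would unfold the two products on $\Gamma(\mathcal{A})$ using the definitions $(a\otimes f)(b\otimes g)=(-1)^{ab}\,ab\otimes fg$ and the analogous rule for $\ast$. Substituting into the displayed formula and using the supercommutativity $fg=(-1)^{fg}gf$ of the Grassmann generators together with the parity constraints $p(a)=p(f)$, $p(b)=p(g)$, the term $\beta\,(b\otimes g)(a\otimes f)$ rewrites as $(-1)^{ab}\,\beta\,(-1)^{ab}\, ba\otimes fg$, producing a sign $(-1)^{ab}$ relative to the first term. Comparing the $\Gamma$-envelope product $\ast$ with the envelope of the claimed super-product, one sees that the $\Gamma$-associated multiplication $\ast$ is exactly the Grassmann envelope of the super-product $a\ast b = \alpha\, ab + (-1)^{ab}\beta\, ba$; the extra sign $(-1)^{ab}$ in the super-formula is precisely what is needed so that the two envelope products agree. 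Since $(\Gamma(\mathcal{A}),\cdot)$ is conservative with this associated product, the equivalence stated in the excerpt yields that $\mathcal{A}$ is conservative with the associated super-multiplication $a\ast b = \alpha\, ab + (-1)^{ab}\beta\, ba$, as claimed.

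The main obstacle, and the step deserving the most care, is the sign bookkeeping in matching the Grassmann envelope of the proposed super-product against the ordinary product $\ast=\alpha xy+\beta yx$ on $\Gamma(\mathcal{A})$: one must verify that the sign $(-1)^{ab}$ decorating the $\beta$-term is forced, and is the unique decoration consistent with both the parity identifications $p(a)=p(f)$ and the commutation rule in $\Gamma$. A secondary point to state explicitly is why $\Gamma(\mathcal{A})$ genuinely lies in $\mathcal{V}$, which is exactly where the homogeneity hypothesis on $\mathcal{V}$ is used; without it the transported hypothesis would be unavailable. Once these sign and homogeneity points are settled, the conclusion follows formally from the envelope equivalence, so no direct manipulation of the degree-4 identity (\ref{cons_oper}) is needed.
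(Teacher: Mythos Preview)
Your proposal is correct and follows exactly the same route as the paper's proof: pass to the Grassmann envelope $\Gamma(\mathcal{A})$, use homogeneity of $\mathcal{V}$ to place it in $\mathcal{V}$, apply the hypothesis to get conservativity with associated product $\alpha xy+\beta yx$, observe that this product is the envelope of the super-product $\alpha ab+(-1)^{ab}\beta ba$, and invoke the stated equivalence between conservativity of $\mathcal{A}$ and of $\Gamma(\mathcal{A})$. Your write-up is in fact more explicit than the paper's about the sign check and about where homogeneity enters.
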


\begin{proof} Let $M$ be a $\mathcal V$-superalgebra. Consider the Grassmann
 enveloping  $\Gamma(M)$, which is an algebra in $\mathcal V$.
 By our assumptions, $\Gamma(M)$ is a conservative algebra,
 and the multiplication in the associated algebra
 $\Gamma(M)^*$ can be defined
 by the formula
 $\Gamma(M)^*(x,y)=\alpha xy+\beta yx$
  for all $x,y\in \Gamma(M)$. This multiplication is obviously
  induced by a multiplication $M^*$ on the space of $M$ which is
  given by $M^*(x,y)=\alpha xy+(-1)^{xy}\beta yx.$
 Therefore, by the general approach above, $M$ is a conservative
 superalgebra with an associated multiplication $M^*.$ \end{proof}

It follows that associative, quasi-associative, Jordan, terminal,
Lie, Leibniz, and Zinbiel superalgebras are conservative (see \cite{KLP}).
In particular, a superalgebra $M$ is terminal if and only if it is conservative
and the multiplication in the associated superalgebra $M^*$ can be given by
\begin{equation}
\label{terminal_assmult}
M^*(x,y)=\frac{2}{3}xy+(-1)^{xy}\frac{1}{3}yx.
\end{equation}

As we have seen, associative and Jordan superalgebras are conservative with the associated multiplication $M = M^*.$ It is natural to ask what is the subclass of conservative superalgebras with this additional restriction.

A superalgebra $U$ is called a
{\it noncommutative Jordan superalgebra} 
if $U$ is flexible (that is, the operator identity $[R_x,L_y]=[L_x,R_y]$ holds in $U$) and its symmetrized superalgebra (the algebra on the space of $U$ with the multiplication $x \circ y = \frac{1}{2}(xy + (-1)^{xy}yx)$) is Jordan. For more information on noncommutative Jordan superalgebras see \cite{ps3, p1} and references therein.


\begin{pro} A flexible conservative superalgebra with
 the product $M$ whose associated superalgebra
 has the same product $M^*=M$ is a
 noncommutative Jordan superalgebra.
 \end{pro}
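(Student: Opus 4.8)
The plan is to read off what must be proved from the definition just given: a noncommutative Jordan superalgebra is one that is flexible (which is assumed) and whose symmetrized superalgebra $M^{+}=(V,\circ)$, $x\circ y=\tfrac12(xy+(-1)^{xy}yx)$, is Jordan. So it suffices to show that $M^{+}$ is a Jordan superalgebra. Since $M^{+}$ is supercommutative, I would invoke the fact recorded above that commutative superalgebras satisfying (\ref{termal}) are Jordan, reducing the goal to showing that $M^{+}$ is terminal. Finally, applying the first Proposition of this subsection to the commutative superalgebra $M^{+}$ and using that for a commutative product $\tfrac23(x\circ y)+(-1)^{xy}\tfrac13(y\circ x)=x\circ y$, terminality of $M^{+}$ becomes equivalent to $M^{+}$ being conservative with associated product equal to itself, i.e. to the identity
\[
[L^{+}_{b},[L^{+}_{a},M^{+}]]=-(-1)^{ab}[L^{+}_{a\circ b},M^{+}],
\]
where $L^{+}_{a}$ is left multiplication in $M^{+}$. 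Thus the entire statement collapses to deriving this conservativity of $M^{+}$ from the two hypotheses on $M$: that $M$ is flexible and conservative with $M^{*}=M$, i.e. $[L_{b},[L_{a},M]]=-(-1)^{ab}[L_{a\ast b},M]$ with $a\ast b=ab$.

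To perform this derivation cleanly I would first pass to the Grassmann envelope. Flexibility, conservativity with associated product equal to the multiplication itself (by the general approach stated above), and the property ``$M^{+}$ is Jordan'' all transfer between $M$ and $\Gamma(M)$; the last one because a direct check gives $\Gamma(M^{+})=\Gamma(M)^{+}$ and Jordanness is detected on the envelope. Hence one may assume $M$ is an ordinary flexible algebra with $[L_{b},[L_{a},M]]=-[L_{ab},M]$, which removes all super-signs. Writing $L^{+}_{a}=\tfrac12(L_{a}+R_{a})$, I would expand the left- and right-hand sides of the target identity into the operator products arising from the $L/R$ splittings and sort them by type. The purely left-handed terms are governed directly by the given conservative identity; the purely right-handed terms by its mirror image; and every mixed term is rewritten by means of the linearized flexibility relation $[L_{x},R_{y}]=[R_{x},L_{y}]$. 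The guiding principle is that flexibility is exactly what is needed to push each right multiplication back onto the left-handed data supplied by the hypothesis, after which the separate families cancel.

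The main obstacle is precisely this operator computation, and within it two points require care. First, the right-handed occurrences must be reduced to the left-handed ones: this rests on the fact that, for a flexible algebra, left conservativity with a given associated product forces the corresponding right conservativity (equivalently, flexibility transports the conservative identity from $L_{a}$ to $R_{a}$ via repeated application of $[L_{x},R_{y}]=[R_{x},L_{y}]$). Second, the sheavy sign bookkeeping in the supercase is what makes the preliminary passage to the Grassmann envelope worthwhile, so that the identity need only be verified once, over an ordinary algebra. Once $M^{+}$ is shown to be conservative with $(M^{+})^{*}=M^{+}$, the chain terminal $\Rightarrow$ Jordan $\Rightarrow$ (together with the assumed flexibility) noncommutative Jordan closes the argument.
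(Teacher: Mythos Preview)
Your approach and the paper's share the decisive step: reduce to the ordinary algebra case via the Grassmann envelope. The paper then stops immediately, citing Kantor's result \cite[Proposition~1]{Kantor72} that a flexible conservative algebra with $M^{*}=M$ is noncommutative Jordan, and concludes by the general principle that $\Gamma(\mathcal{A})$ noncommutative Jordan implies $\mathcal{A}$ noncommutative Jordan as a superalgebra.

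What you do differently is to unpack that cited proposition rather than invoke it: you aim to show directly that $M^{+}$ is conservative with $(M^{+})^{*}=M^{+}$, hence terminal (by the commutative specialization of Proposition~1 in this paper), hence Jordan. That chain of reductions is correct. The trade-off is that the paper's version is two lines, while yours leaves the real work---the operator computation expressing $[L_{b}^{+},[L_{a}^{+},M^{+}]]+[L_{a\circ b}^{+},M^{+}]$ in terms of the hypothesis---only sketched. In particular, your assertion that ``for a flexible algebra, left conservativity with a given associated product forces the corresponding right conservativity'' is plausible and is indeed what makes the argument close, but it is not immediate from the linearized flexibility $[L_{x},R_{y}]=[R_{x},L_{y}]$ alone; one has to track how the degree-4 identity transforms, and this is precisely the content of Kantor's original proof. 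So your plan is sound and self-contained where the paper's is not, but to be a proof it still needs that computation written out (or a reference to \cite{Kantor72}).
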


 \begin{proof} Let ${\cal A}$ be such flexible conservative superalgebra.
   Then $\Gamma({\cal A})$
 is a flexible conservative algebra with the extended products
 $M=M^*$. By \cite[Proposition 1]{Kantor72}, $\Gamma({\cal A})$
 is a  noncommutative Jordan algebra. Therefore, by the general
 definition of a superalgebra of a given variety ${\cal V}$, ${\cal
 A}$ is a  noncommutative Jordan superalgebra.  \end{proof}

\begin{pro}
A conservative superalgebra $M$ with a unity is a
noncommutative Jordan superalgebra. \smallskip\rm
\end{pro}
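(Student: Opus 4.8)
The plan is to invoke the preceding Proposition, which asserts that a flexible conservative superalgebra whose associated product satisfies $M^*=M$ is a noncommutative Jordan superalgebra. Thus it suffices to prove two things for a conservative superalgebra $M$ with unity $e$: first, that the associated product coincides with the original one, $M^*=M$; and second, that $M$ is flexible. I first record that $e$ must be even: writing $e=e\0+e\1$ and comparing homogeneous components of $ex=xe=x$ forces $e\1=0$, so $p(e)=0$ and all sign factors involving $e$ are trivial. (Equivalently, one could pass to the Grassmann envelope $\Gamma(M)$, which is an ordinary conservative algebra with unity $e\otimes 1$, carry out the computation below with all super signs set to $1$, and then pull the two conclusions back to $M$, using that both flexibility and the condition $M^*=M$ are preserved by the correspondence between a variety and its super-counterpart.)

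The first step is to substitute $x=e$ into the explicit degree-$4$ form of the conservative identity displayed after (\ref{cons_oper}). Using $ey=y$, $ae=a$ and $p(e)=0$, the four summands proportional to $b(ay)$ cancel, the four summands proportional to $a(by)$ cancel, and on the left only $(-1)^{ab}(ab)y$ survives; the right-hand side collapses to $(-1)^{ab}(a*b)y$. Hence $(ab)y=(a*b)y$ for all $y$, and setting $y=e$ yields $a*b=ab$, i.e. $M^*=M$. For the second step I substitute $y=e$ into the same identity, now using $M^*=M$ on the right. After the analogous pairwise cancellations (and after the two terms $x(ab)$ on the two sides cancel each other) the surviving relation is
\[(-1)^{ax}\bigl[(bx)a-b(xa)\bigr]+(-1)^{ab+bx}\bigl[(ax)b-a(xb)\bigr]=0,\]
which is precisely the super-flexible law: in operator form it is exactly the identity $[R_x,L_y]=[L_x,R_y]$ defining flexibility. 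Therefore $M$ is flexible.

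With $M^*=M$ and flexibility established, the preceding Proposition applies verbatim and shows that $M$ is a noncommutative Jordan superalgebra, which completes the proof. The only real work is the bookkeeping in the two substitutions: one must verify that all unwanted terms cancel in pairs and, crucially, recognize the residual relation obtained from $y=e$ as the flexible identity with the correct super signs. This sign-tracking is the main obstacle; it is most safely handled either by the careful direct computation indicated above or, more cleanly, by performing the two substitutions in the Grassmann envelope $\Gamma(M)$, where all signs are trivial and the residual relation is the ordinary linearized flexible law $(bx)a-b(xa)+(ax)b-a(xb)=0$, equivalent to $[R_x,L_y]=[L_x,R_y]$.
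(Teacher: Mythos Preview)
Your proof is correct. The two substitutions $x=e$ and $y=e$ do exactly what you claim: the first yields $L_{ab}=L_{a*b}$, hence $a*b=ab$ since a unity forces the left annihilator to vanish; the second yields $(-1)^{ax}(b,x,a)+(-1)^{ab+bx}(a,x,b)=0$, which is precisely the super-flexible law (equivalently, the linearized form of $(a,x,a)=0$ in the Grassmann envelope). Invoking the preceding Proposition then finishes the argument.

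Your route differs from the paper's. The paper does not perform any of these substitutions; it simply observes that the Grassmann envelope $\Gamma(M)$ is an ordinary conservative algebra with unity, cites Kantor's result \cite[Proposition~2]{Kantor72} to conclude that $\Gamma(M)$ is noncommutative Jordan, and then pulls this back to $M$ by the general superization principle. This is shorter and avoids all sign bookkeeping, at the cost of importing the entire algebraic argument as a black box. Your approach, by contrast, makes explicit \emph{why} a unity forces $M^{*}=M$ and flexibility directly from the defining identity, which is more informative structurally; but since the preceding Proposition you invoke is itself proved in the paper via the Grassmann envelope, the gain in self-containedness is partial. Either way, the argument is sound.
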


  \begin{proof} Let ${\cal A}$ be a conservative superalgebra with a unity.
   Then $\Gamma({\cal A})$
  is a conservative algebra with a unity.
  By \cite[Proposition 2]{Kantor72}, $\Gamma({\cal A})$
 is a  noncommutative Jordan algebra. Therefore, ${\cal
  A}$ is a  noncommutative Jordan superalgebra.  \end{proof}

\subsection{Operator Superalgebras}


Let $M$ be a conservative superalgebra on an underlying vector
space $V$. Considering both parts of (\ref{cons_oper}) as operators
acting on $y \in V,$ we obtain  the following operator relation:
\[
[L_b,[L_a,L_x]] - [L_b,L_{ax}] - (-1)^{ab}[L_a,L_{bx}] +
(-1)^{ab}L_{a(bx)} +(-1)^{ab}[L_{a*b},L_x] - (-1)^{ab}L_{(a*b)x} = 0.
\]
Therefore, $\mathcal{U}_0(V):= \langle L_a, [L_a,L_b] : a, b \in V
\rangle\leq \mathfrak{gl}(V)$. Moreover, since (\ref{oper_comm2})
gives an action of the Lie superalgebra $\mathfrak{gl}(V)$ on
$\mathcal{U}(V)$, we immediately get
\[[[L_b,L_a],M] = [L_{b*a - (-1)^{ab}a*b},M],\]
which implies that  $\mathcal{U}_1(V):= \langle M, [L_a,M]: a \in V
\rangle$  is a $\mathcal{U}_0(V)$-submodule of $\mathcal{U}(V).$ This also implies that the operators $[L_b,L_a]-(-1)^{ab}L_{{b*a - (-1)^{ab}a*b}},\ a, b \in V,$ are superderivations of $M.$




\begin{cor} Let $M$ be a terminal superalgebra. The linear transformations
$[L_a,L_b]-\frac{1}{3}L_{[a,b]}$ are superderivations of $M$
for all $a,b\in M$.\qed
\end{cor}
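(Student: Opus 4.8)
The plan is to specialize the superderivation statement of this subsection to the terminal case, where the associated product is completely determined. Recall that, by (\ref{terminal_assmult}), a superalgebra $M$ is terminal precisely when it is conservative with associated product $a*b=\frac23 ab+(-1)^{ab}\frac13 ba$; hence all the operator identities obtained above for conservative superalgebras apply verbatim with this explicit $M^*$.

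First I would invoke the relation $[[L_b,L_a],M]=[L_{b*a-(-1)^{ab}a*b},M]$ established above. Since $[\varphi,M]=0$ is exactly the superderivation condition for a homogeneous operator $\varphi$ (indeed $[\varphi,M](x,y)=\varphi(xy)-\varphi(x)y-(-1)^{\varphi x}x\varphi(y)$), and since $[\,\cdot\,,M]$ is linear on operators of a fixed parity, this relation forces the operator $[L_b,L_a]-L_{b*a-(-1)^{ab}a*b}$, which has the same parity $p(a)+p(b)$ in both summands, to annihilate $M$. Thus it is a superderivation of $M$ for every pair $a,b$.

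It then remains to evaluate $b*a-(-1)^{ab}a*b$ for the terminal $M^*$. Writing $b*a=\frac23 ba+(-1)^{ab}\frac13 ab$ and $(-1)^{ab}a*b=(-1)^{ab}\frac23 ab+\frac13 ba$, the $\frac23$- and $\frac13$-contributions recombine to give $b*a-(-1)^{ab}a*b=\frac13\bigl(ba-(-1)^{ab}ab\bigr)=\frac13[b,a]$, where $[b,a]=ba-(-1)^{ab}ab$ is the supercommutator. Hence $[L_b,L_a]-\frac13 L_{[b,a]}$ is a superderivation for all $a,b\in M$, and renaming the pair $(a,b)$ yields the asserted transformations $[L_a,L_b]-\frac13 L_{[a,b]}$. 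The argument is a direct substitution, and the only delicate point, which I expect to be the main (and essentially the only) obstacle, is the sign bookkeeping: the factors $(-1)^{ab}$ produced by the terminal product must cancel precisely against those in $b*a-(-1)^{ab}a*b$ so that the scalar $\frac13$ and the clean supercommutator emerge without residual signs.
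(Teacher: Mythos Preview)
Your proof is correct and is exactly the argument the paper intends: the corollary is marked \qed\ precisely because it follows by substituting the terminal associated product (\ref{terminal_assmult}) into the general fact, recorded just before, that $[L_b,L_a]-L_{b*a-(-1)^{ab}a*b}$ is a superderivation of $M$. Your sign computation $b*a-(-1)^{ab}a*b=\tfrac13[b,a]$ is right, and the swap of $a$ and $b$ at the end is harmless.
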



\begin{Rem}
Let $M$  be a (super)algebra such that $\mathcal{U}_1(M)$ is a
$\mathcal{U}_0(M)$-submodule of $\mathcal{U}_1(M)$. Then $M$ is
called \textit{rigid} or \textit{quasi-conservative}
\cite{Kantor89_L}. In \cite{kacan}, the simple linearly compact rigid
commutative and anticommutative superalgebras over an algebraically
closed field of characteristic 0 were classified, and in \cite{CKO}
the 2-dimensional rigid algebras were classified.
\end{Rem}

\subsection{Jacobi Elements and Quasiunities}

An element $a$ in a superalgebra $M$ is called a {\it Jacobi
element} provided that
\begin{equation}
\label{jacobi_1} a(xy)=(ax)y+(-1)^{ax}x(ay)
\end{equation}
holds for all $x,y\in M$.

In other words, $a$ is a Jacobi element if $L_a$ is a
superderivation of $M$. The relation (\ref{jacobi_1}) can be
rewritten in the following forms:
\begin{equation}
\label{jacobi_2} [L_a,L_x] = L_{ax} \text{ for every } x \in M,
\end{equation}
\begin{equation}
\label{jacobi_3}
[L_a,M] = 0.
\end{equation}

Denote by $J$ the space of all Jacobi elements of a superalgebra
$M$.  Let $N:=\{a \in M:L_a = 0\}$ be the {\it left annihilator} of
 $M$. Obviously, $N \subseteq J.$ An ideal
 $I$ of $M$ is called a {\it Jacobi ideal} provided that $I\subseteq
 J$.

The following statement is immediate from the definitions and (\ref{jacobi_2}).
\begin{Lem} Let $M$ be a superalgebra, and let $J$ and $N$ be as above.
 Then $J$ is a subsuperalgebra of $M;$
    $N$ is an ideal of $J$, and the quotient superalgebra $J/N$
    is isomorphic to a subsuperalgebra of the Lie superalgebra
    of derivations of $M.$
If $M$ possesses a unity then $J=0;$
  and if $M$ is a Lie superalgebra then $J=M$.
\end{Lem}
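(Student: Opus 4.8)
The plan is to verify each of the assertions in the Lemma directly from the definitions, since each statement reduces to a short computation or a structural observation once the right reformulation is in hand. Recall that $a$ is a Jacobi element precisely when $L_a$ is a superderivation of $M$, equivalently when $[L_a,M]=0$ by (\ref{jacobi_3}), and equivalently when $[L_a,L_x]=L_{ax}$ for all $x$ by (\ref{jacobi_2}). The cleanest strategy is to work with the operator formulation $[L_a,M]=0$ throughout, because the bracket $[\cdot,M]$ is $\mathbb{F}$-linear and graded, so $J$ is automatically a graded subspace; homogeneity of $J$ and $N$ then comes for free.

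First I would show that $J$ is a subsuperalgebra. Take homogeneous $a,b\in J$ and test whether $ab\in J$, i.e. whether $L_{ab}$ is a superderivation. Using (\ref{jacobi_2}) for $a$ we have $L_{ab}=[L_a,L_b]$, and since both $L_a$ and $L_b$ act as superderivations on $M$, their supercommutator $[L_a,L_b]$ is again a superderivation (the supercommutator of two superderivations of any superalgebra is a superderivation). Hence $L_{ab}=[L_a,L_b]$ is a superderivation, so $ab\in J$. This is the main computational point, and it is routine. Next, $N=\{a:L_a=0\}$ is clearly a graded subspace, and it is an ideal of $J$: if $a\in J$ and $n\in N$ then $L_{an}=[L_a,L_n]=0$ since $L_n=0$, so $an\in N$; on the other side $L_{na}=0$ because $n\in N$ means $L_n=0$, giving $na\in N$. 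Thus $N$ is a two-sided ideal of $J$.

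For the quotient, I would define the map $J\to \operatorname{Der}(M)$ sending $a\mapsto L_a$. This is linear, it lands in the superderivations by the very definition of $J$, and it is a homomorphism of superalgebras from $J$ (with its product) into the Lie superalgebra $\operatorname{Der}(M)$: indeed the computation $L_{ab}=[L_a,L_b]$ above shows that the associative product on $J$ is carried to the Lie bracket on $\operatorname{Der}(M)$. Its kernel is exactly $N$, so the induced map $J/N\to\operatorname{Der}(M)$ is an injective homomorphism of Lie superalgebras, identifying $J/N$ with a subsuperalgebra of $\operatorname{Der}(M)$.

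Finally, the two special cases are immediate. If $M$ has a unity $e$, then for $a\in J$ the superderivation property applied to $x=y=e$ (or simply the identity $a=a\cdot e$ combined with $L_a e = a$) forces $a=0$, since a superderivation annihilates the unity; hence $J=0$. If $M$ is a Lie superalgebra, then the graded Jacobi identity is exactly the statement that $[L_a,M]=0$ for every $a$, so every element is a Jacobi element and $J=M$. The only place demanding a little care is confirming that the supercommutator of two superderivations is again a superderivation with the correct sign conventions, but this is standard and is the single nonobvious input to the whole argument.
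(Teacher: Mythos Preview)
Your proposal is correct and follows exactly the route the paper indicates: the paper declares the lemma ``immediate from the definitions and (\ref{jacobi_2}),'' and your argument is precisely the unpacking of that claim, using $L_{ab}=[L_a,L_b]$ for $a\in J$ to show $J$ is closed, $N$ is an ideal, and $a\mapsto L_a$ induces the embedding $J/N\hookrightarrow\operatorname{Der}(M)$. The special cases (unity and Lie) are handled just as intended.
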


An even element $e \in M$ is said to be a {\it left quasiunity} if the equality
\[e(xy)=(ex)y+x(ey)-xy \]
holds for all $x, y \in M$. This condition is equivalent to the
relations
\begin{equation}
\label{quasiunit_oper} [L_e,L_x] = L_{ex-x} \textrm{ for every } x
\in M,
\end{equation}
\begin{equation}
\label{quasiunit_M}
[L_e,M] = -M.
\end{equation}
Obviously, a left unity is a left quasiunity. But, in general, the converse is not true (see examples in the next section).
The following theorem is proved similarly to
 \cite[Theorem 1]{Kantor90}, so we only give an outline of the proof.
\begin{Th}
Let $M$ be a conservative superalgebra. The associated superalgebra
$M^*$ is defined up to an arbitrary superalgebra with values in $J$.
Moreover, the following relations hold$:$
\begin{eqnarray}
M^*(a,b) \equiv 0 \ (\operatorname{mod} J),\ a \in J, \label{8} \\
M^*(a,b) \equiv -(-1)^{ab}ba  \ (\operatorname{mod} J),\ b \in J.
\label{9}
\end{eqnarray}
If $M$ has a left quasiunity $e,$ then
\begin{eqnarray}
M^*(e,a) \equiv a,\ M^*(a,e) \equiv 2a-ea \ (\operatorname{mod} J).
\label{10}
\end{eqnarray}
\end{Th}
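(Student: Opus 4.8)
The plan is to read off everything from the conservativity relation (\ref{cons_oper}) using two ingredients: the operator characterizations of Jacobi elements and of quasiunities, and the fact that (\ref{oper_comm2}) makes $\mathcal{U}(V)$ a module over the Lie superalgebra $\mathfrak{gl}(V)$. The latter supplies the super-Jacobi identity
\[
[X,[Y,M]] = [[X,Y],M] + (-1)^{XY}[Y,[X,M]],
\]
which I will use with $X,Y$ taken among the left multiplications $L_a$. The recurring observation is that $L$ is linear in its subscript, so $[L_c,M]=0$ whenever $c\in J$ by (\ref{jacobi_3}); this is what turns operator identities into congruences modulo $J$.

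First I would settle the indeterminacy of $M^*$. If $M^*$ is an associated multiplication and $\phi$ is any even bilinear map with values in $J$, then $[L_{a*b+\phi(a,b)},M]=[L_{a*b},M]+[L_{\phi(a,b)},M]=[L_{a*b},M]$ because $\phi(a,b)\in J$; hence $M^*+\phi$ again satisfies (\ref{cons_oper}). Conversely, if $M^*_1$ and $M^*_2$ are two associated multiplications, comparing the right-hand sides of (\ref{cons_oper}) gives $[L_{a*_1b-a*_2b},M]=0$ for all $a,b$, so $a*_1b-a*_2b\in J$, i.e.\ their difference is a superalgebra with values in $J$. This is exactly the claimed degree of freedom.

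Next come the congruences (\ref{8}) and (\ref{9}). For (\ref{8}), take $a\in J$: then $[L_a,M]=0$ by (\ref{jacobi_3}), so the left-hand side of (\ref{cons_oper}) vanishes, forcing $[L_{a*b},M]=0$, that is $a*b\in J$. For (\ref{9}), take $b\in J$; then $[L_b,M]=0$, so the super-Jacobi identity collapses the left-hand side of (\ref{cons_oper}) to $[[L_b,L_a],M]$, and since $L_b$ is a superderivation, (\ref{jacobi_2}) gives $[L_b,L_a]=L_{ba}$. Comparing with the right-hand side of (\ref{cons_oper}) yields $[L_{ba+(-1)^{ab}(a*b)},M]=0$, whence $a*b\equiv -(-1)^{ab}ba \pmod{J}$.

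Finally, for the quasiunity relations (\ref{10}) I would substitute $e$ into (\ref{cons_oper}) using $[L_e,M]=-M$ from (\ref{quasiunit_M}) and $[L_e,L_x]=L_{ex-x}$ from (\ref{quasiunit_oper}), noting that $e$ is even so every sign factor involving $e$ is trivial. The pair with first argument $e$ reduces (\ref{cons_oper}) to $[L_a,M]=[L_{e*a},M]$, giving $e*a\equiv a$; the pair with second argument $e$, after expanding the left-hand side via the super-Jacobi identity as $[L_{ea-a},M]-[L_a,M]=[L_{ea-2a},M]$, gives $a*e\equiv 2a-ea$. The only real obstacle is bookkeeping of the $\mathbb{Z}_2$-signs in the super-Jacobi identity and in (\ref{cons_oper}); once those are pinned down every step is forced, and the sole substantive input beyond formal manipulation is the module property of (\ref{oper_comm2}), already recorded above.
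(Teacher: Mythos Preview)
Your argument is correct and follows essentially the same route as the paper: the indeterminacy of $M^*$ and (\ref{8}) come straight from (\ref{cons_oper}) and (\ref{jacobi_3}); for (\ref{9}) you expand $[L_b,[L_a,M]]$ via the super-Jacobi identity and use (\ref{jacobi_2}), (\ref{jacobi_3}) to obtain $[L_{ba},M]$; and for (\ref{10}) you substitute $e$ into (\ref{cons_oper}) using (\ref{quasiunit_M}) and (\ref{quasiunit_oper}) exactly as the authors do. The only difference is that you spell out the converse direction of the ``defined up to $J$'' claim, which the paper leaves implicit.
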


\begin{proof}
The first statement and (\ref{8}) are immediate by
(\ref{cons_oper}) and (\ref{jacobi_3}). To prove (\ref{9}) it
suffices to show that
\begin{equation}
\label{jacobi_double_comm} [L_b,[L_a,M]] = [L_{ba},M]
\end{equation}
 for all $a \in M, b \in J$. It follows easily from (\ref{jacobi_2})
and (\ref{jacobi_3}) that
\[[L_b,[L_a,M]] = [[L_b,L_a],M] + (-1)^{ab}[L_a,[L_b,M]] = [L_{ba},M].\]
The first of the equations (\ref{10}) follows from (\ref{cons_oper})
and (\ref{quasiunit_M}). The second is proved by analogy with
(\ref{9}): using (\ref{quasiunit_oper}) and (\ref{quasiunit_M}) we
get
\begin{equation}
\label{quasiunit_double_comm}
[L_e,[L_a,M]] = [L_{ea-2a},M].
\end{equation}
\end{proof}

\section{The Universal Conservative Superalgebra}
In this section we define the superalgebra structure on the space
$\mathcal{U}(V)$ of all bilinear operations on a superspace $V$ and
prove that it is conservative. Moreover, we show that every
finite-dimensional conservative superalgebra is embedded (modulo its
maximal Jacobi ideal) in $\mathcal{U}(V)$ for a certain
finite-dimensional space $V.$

\subsection{The superalgebra $\mathcal{U}(V)$}
Let $V$ be a superspace. The space of the superalgebra
$\mathcal{U}(V)$ is the superspace of all bilinear operations
on $V.$
 Fix a nonzero homogeneous  $a\in V$. Define the multiplication
 $\bigtriangleup_a$ in $\mathcal{U}(V)$ by the rule
\begin{eqnarray}
(A \bigtriangleup_a B)(x,y) = A(a,B(x,y))- (-1)^{\scriptscriptstyle
 B(A+a)}B(A(a,x),y)- (-1)^{\scriptscriptstyle (A+a)(B+x)}B(x,A(a,y)).
 \label{sukant}
\end{eqnarray}


Consider the natural action of the group $\operatorname{gl}(V)$ of
even automorphisms of $V$ on $\mathcal{U}(V):$
\begin{equation}
\label{induced_iso} \varphi(A)(x,y) =
\varphi(A(\varphi^{-1}(x),\varphi^{-1}(y)))
\end{equation}
(we denote an automorphism and its action by the same symbol
$\varphi$). A direct computation shows that the mapping $A \mapsto
\varphi(A)$ is an isomorphism between
$(\mathcal{U}(V),\bigtriangleup_a)$ and
$(\mathcal{U}(V),\bigtriangleup_{\varphi(a)})$
Therefore, different nonzero
even (respectively, odd) vectors $a$ give rise to isomorphic {\it
even} (respectively, {\it odd}) superalgebras, which we denote
 $\mathcal{U}(V)^0$ and
 $\mathcal{U}(V)^1,$ respectively.

Moreover, consider the opposite superspace $V^{\Pi}$ given by
$V^{\Pi}_{\bar{0}} = V_{\bar{1}}, V^{\Pi}_{\bar{1}} = V_{\bar{0}}.$
Then the parity-reversing isomorphism $V \cong V^{\Pi}$ induces an
 isomorphism between $\mathcal{U}(V^{\Pi})^1$ and the odd
superalgebra obtained from $\mathcal{U}(V)^0$ by reversing the
parity. Therefore, it suffices to consider only the superalgebras
$\mathcal{U}(V)^0.$ For the sake of simplicity, we denote them
  by $\mathcal{U}(V).$

If $V = V_{n,m}$ is a finite-dimensional superspace with
 ${\rm dim} V_{\bar 0}=n$
and ${\rm dim} V_{\bar 1}=m$ (further in this case we say that $V$ is of
dimension $n+m$) then we denote $\mathcal{U}(V)$ by
$\mathcal{U}(n,m).$

\begin{Th}
\label{U_cons} Let $V$ be a superspace, and let $a\in V_{\bar 0}$.
 The superalgebra $(\mathcal{U}(V),\bigtriangleup_a)$
is conservative, and the associated multiplication can be given by
\begin{equation}
\label{U_assmult_1}
A \bigtriangledown^1_aB(x,y)=-(-1)^{AB}B(a,A(x,y))
\end{equation}
or
\begin{equation}
\label{U_assmult_2}
A \bigtriangledown^2_aB(x,y)= \frac{1}{3}(A^*\bigtriangleup_aB + (-1)^{AB}\tilde{B}\bigtriangleup_aA),
\end{equation}
where $A^*(x,y)=A(x,y)+(-1)^{xy}A(y,x)$ and $\tilde{B}(x,y) = 2(-1)^{xy}B(y,x)-B(x,y).$
\end{Th}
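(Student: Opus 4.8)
The plan is to verify the conservative identity (\ref{cons_oper}) directly for the algebra $(\mathcal U(V),\bigtriangleup_a)$, that is, with $M=\bigtriangleup_a$ and $L_A$ the left multiplication $L_A(B)=A\bigtriangleup_a B$, exploiting that the Kantor product is itself written through the action (\ref{oper_comm2}). For $v\in V$ and $A\in\mathcal U(V)$ write $A_v:=A(v,\cdot)\in\mathfrak{gl}(V)$; since $A_a=L_a^A$, formula (\ref{sukant}) reads simply $A\bigtriangleup_a B=[A_a,B]$, so that $L_A$ is precisely the operator by which $A_a$ acts on $\mathcal U(V)$. As (\ref{oper_comm2}) is a Lie superaction, every bracket of left multiplications with $M$, once evaluated on a pair $(P,Q)$, will collapse to an identity in $\mathfrak{gl}(V)$ applied to $Q$; this is what makes the computation tractable.

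First I would record the ``defect'' formula obtained by expanding $(A\bigtriangleup_a B)(a,\cdot)$, namely
\[
(A\bigtriangleup_a B)_a=[A_a,B_a]-(-1)^{AB}B_{A(a,a)} .
\]
Writing $[L_A,M](P,Q)=A\bigtriangleup_a(P\bigtriangleup_a Q)-(A\bigtriangleup_a P)\bigtriangleup_a Q-(-1)^{AP}P\bigtriangleup_a(A\bigtriangleup_a Q)$, applying the super-Jacobi identity of the action (\ref{oper_comm2}) to the first and third terms, and inserting the defect formula, everything collapses to the clean expression
\[
[L_A,M](P,Q)=(-1)^{AP}\,[\,P_{A(a,a)},\,Q\,].
\]
The content is that the failure of $L_A$ to be a derivation of $\bigtriangleup_a$ is again an action bracket, governed by the single vector $A(a,a)$.

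Next I would iterate. Setting $w:=A(a,a)$ and expanding $[L_B,[L_A,M]](P,Q)$ by (\ref{oper_comm2}), one gets three terms; the super-Jacobi identity splits the first, and the two summands containing $[B_a,Q]=B\bigtriangleup_a Q$ cancel exactly, since they carry opposite signs with equal Koszul factors $(-1)^{B(P+A)}$. The surviving part is $(-1)^{AP}\big[\,[B_a,P_w]-(B\bigtriangleup_a P)_w,\,Q\,\big]$, and the one-line operator identity
\[
[B_a,P_w]-(B\bigtriangleup_a P)_w=(-1)^{BP}P_{B(a,w)}
\]
then gives that the left side of (\ref{cons_oper}) equals $(-1)^{(A+B)P}[\,P_{B(a,A(a,a))},Q\,]$. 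Computing the right side the same way, with $D:=A\bigtriangledown^1_aB$ and $D(a,a)=-(-1)^{AB}B(a,A(a,a))$, produces the identical expression. This proves conservativity with the associated product $\bigtriangledown^1_a$; note that I match the operators acting on $Q$ exactly, so no faithfulness of the action is invoked.

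For $\bigtriangledown^2_a$ I would use the earlier theorem of this section: the associated multiplication is determined only modulo a multiplication with values in the space $J$ of Jacobi elements. The displayed formula for $[L_C,M]$ shows at once, via (\ref{jacobi_3}), that $C\in J$ if and only if $C(a,a)=0$. Hence it suffices to check $(A\bigtriangledown^2_aB)(a,a)=(A\bigtriangledown^1_aB)(a,a)$. Substituting $x=y=a$ (so $A^*(a,a)=2A(a,a)$ and $\tilde B(a,a)=B(a,a)$) and using the scalar defect formula $(X\bigtriangleup_a Y)(a,a)=X(a,Y(a,a))-(-1)^{XY}Y(a,X(a,a))-(-1)^{XY}Y(X(a,a),a)$, the two summands of $\bigtriangledown^2_a$ combine so that the $A(\cdot,\cdot)$-terms and the $B(A(a,a),a)$-terms cancel, leaving $-3(-1)^{AB}B(a,A(a,a))$; the factor $\tfrac13$ then reproduces $(A\bigtriangledown^1_aB)(a,a)$. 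Thus $A\bigtriangledown^2_aB-A\bigtriangledown^1_aB\in J$ and $\bigtriangledown^2_a$ is an associated multiplication as well. The only real obstacle throughout is the Koszul sign bookkeeping: conceptually the argument is three short computations (the defect formula, the cancellation of the $[B_a,Q]$-terms, and the operator identity), but the cancellations are exact only if the parities of $A,B,P$ and of the auxiliary vector $A(a,a)$ are tracked with care.
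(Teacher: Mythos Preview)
Your proof is correct and follows essentially the same route as the paper. The paper summarizes its direct computation in the single formula $[L_A,\bigtriangleup_b](W,V)=(-1)^{AW}W\bigtriangleup_{A(a,b)}V$ (their (\ref{TAaa})), which for $b=a$ is exactly your $[L_A,M](P,Q)=(-1)^{AP}[P_{A(a,a)},Q]$, and then iterates to obtain $(-1)^{(A+B)W}W\bigtriangleup_{B(a,A(a,a))}V$; your ``defect formula'' and ``operator identity'' make explicit the super-Jacobi cancellations that the paper hides under ``a straightforward computation''. For $\bigtriangledown^2_a$ both arguments reduce to checking $(A\bigtriangledown^2_aB)(a,a)=-(-1)^{AB}B(a,A(a,a))$ and invoking that the associated product is determined modulo $J=\{C:C(a,a)=0\}$.
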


\begin{proof}
Let $\bigtriangleup_a$ be the product on $\mathcal{U}(V)$
given by (\ref{sukant}). A straightforward
 computation shows that for $W, V \in
\mathcal{U}(V)$ and $x,y \in V$ we have


\begin{flushleft}$[L_A, \bigtriangleup_b](W,V)(x,y)=
(-1)^{(A+a)W} (W(A(a,b),V(x,y))-$\end{flushleft}
\begin{flushright} $- (-1)^{V(W+A+a+b)}V(W(A(a,b),x),y)-
(-1)^{(V+x)(W+A+a+b)}V(x,W(A(a,b),y)).$\end{flushright}

In other words,
\begin{eqnarray}[L_A, \bigtriangleup_b] (W,V) = (-1)^{AW} W
\bigtriangleup_{A(a,b)} V. \label{TAaa}
\end{eqnarray}

Now, for  $A \bigtriangledown^1_a  B(x,y)=-(-1)^{AB}B(a,A(x,y))$ we have
\[[L_B,[L_A, \bigtriangleup_a]](W,V)=(-1)^{(A+B)W} W
\bigtriangleup_{B(a,A(a,a))} V= -(-1)^{AB} [L_{A \bigtriangledown^1_a  B},
\bigtriangleup_a](W,V).\]

Analogously one can show that $(A\bigtriangledown^2_a B)(a,a) = -(-1)^{AB}B(a,A(a,a)),$ which proves that we can also take $\bigtriangledown^2_a$ as the multiplication in the associated superalgebra.
\end{proof}

By (\ref{jacobi_3}) and (\ref{TAaa}), the Jacobi subspace $J$ of
 $(\mathcal{U}(V),\bigtriangleup_a)$ consists precisely of those
$A(x,y) \in \mathcal{U}(V)$ for which $A(a,a)=0,$ so we may identify
the spaces $\mathcal{U}(V)/J$ and $V$ by the mapping $A \mapsto
A(a,a).$ In particular, for the algebra $\mathcal{U}(n,m)$ we have
$\operatorname{codim}(J) = n+m.$

If the mapping $\mathcal{U}(V)\to \mathfrak{gl}(V)$ given by $A \mapsto L_a^A$, is surjective
(which is always the case if $V$ is of countable dimension) then
every operator $A$ such that $L_a^A = -id$ is  a left unity of
$(\mathcal{U}(V),\bigtriangleup_a)$.

\begin{Lem}
The algebra $\mathcal{U}(V)$ has no nonzero Jacobi ideals.
\end{Lem}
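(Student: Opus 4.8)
The plan is to show that any Jacobi ideal $I$ of $\mathcal{U}(V)$ is zero by exploiting the explicit description of the Jacobi subspace $J$ obtained just before the statement, namely that $A \in J$ if and only if $A(a,a) = 0$, together with the ideal condition. Recall from the preceding remarks that $\mathcal{U}(V)/J$ is identified with $V$ via $A \mapsto A(a,a)$. So the first step is to take $A \in I$; since $I \subseteq J$ we have $A(a,a) = 0$. The goal is to leverage the fact that $I$ is closed under multiplication by arbitrary elements of $\mathcal{U}(V)$ (from both sides) to force $A(x,y) = 0$ for all $x,y \in V$, not just at the diagonal point $(a,a)$.

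The key tool will be the product formula (\ref{sukant}) and the observation that, since $I$ is an ideal, $B \bigtriangleup_a A \in I \subseteq J$ for every $B \in \mathcal{U}(V)$. Writing out what membership in $J$ means for $B \bigtriangleup_a A$, i.e.\ evaluating $(B \bigtriangleup_a A)(a,a) = 0$, and using that $A(a,a)=0$ already kills the first term of the Kantor product, I expect to extract a relation of the form $B(A(a,a),a) + \ldots$ that constrains the values $A(a,x)$ and $A(x,a)$. Then I would feed in carefully chosen operators $B$ — for instance operators supported so that $L_a^B = B(a,-)$ realizes a prescribed linear map on $V$ — to probe $A$ at arbitrary arguments. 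Since $L_a^{(\cdot)}\colon \mathcal{U}(V) \to \mathfrak{gl}(V)$ is surjective for $V$ of countable dimension (as noted in the excerpt), I can choose $B$ so that $L_a^B$ is any desired endomorphism, and similarly exploit the $A \bigtriangleup_a B \in I$ direction.

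Concretely, the strategy is to first use the ideal property to deduce that the bilinear map $A$ vanishes whenever one slot equals $a$, i.e.\ $A(a,y) = 0$ and $A(x,a) = 0$ for all $x,y$; this should come from applying the Jacobi condition to $A \bigtriangleup_a B$ and $B \bigtriangleup_a A$ and varying $B$ over operators whose left multiplication by $a$ ranges over $\mathfrak{gl}(V)$. Having killed the ``$a$-slices'' of $A$, the second step is to bootstrap to the full vanishing of $A$: I would choose operators $B$ that translate arbitrary arguments $x,y$ into the $a$-slot, so that the already-established relations $A(a,-)=A(-,a)=0$ propagate to $A(x,y)=0$ for all $x,y \in V$. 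For general (not countable-dimensional) $V$ one either reduces to the finitely generated case or notes that the relevant operators $B$ can still be chosen locally, since only finitely many vectors are involved in any single evaluation.

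The main obstacle I anticipate is bookkeeping with the sign factors $(-1)^{\ast}$ in (\ref{sukant}) when specializing $B$, and more substantively, arranging the operators $B$ so that the Kantor product isolates exactly the value $A(x,y)$ I want to annihilate without reintroducing uncontrolled terms; the surjectivity of $A \mapsto L_a^A$ is what makes this possible, and the heart of the argument is choosing those $B$ correctly. The passage from vanishing on $a$-slices to vanishing everywhere is where I expect the real work, since it requires simultaneously controlling both the outer application and the two inner substitutions in the product.
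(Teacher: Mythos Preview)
Your plan is essentially the paper's proof: start from $A(a,a)=0$, use the ideal condition on $A\bigtriangleup_a B$ and $B\bigtriangleup_a A$ evaluated at $(a,a)$ to kill the $a$-slices $A(a,V)=A(V,a)=0$, then iterate once more (apply the slice-vanishing to $B\bigtriangleup_a A$ itself, evaluated at $(a,y)$) to get $A(B(a,a),y)=0$ for all $B,y$, hence $A=0$.

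One small correction: you do not need the surjectivity of $B\mapsto L_a^B$ onto $\mathfrak{gl}(V)$, so the countable-dimension caveat is irrelevant here. All that is used is the surjectivity of $B\mapsto B(a,a)$ onto $V$, which holds for any superspace $V$ since $a\neq 0$. With that observation the ``real work'' you anticipate in the bootstrap step evaporates: after the first pass you know every element $C$ of the ideal satisfies $C(a,V)=C(V,a)=0$, and applying this with $C=B\bigtriangleup_a A$ at the argument $(a,y)$ leaves only the single term $(-1)^{AB}A(B(a,a),y)$, which must vanish for all $B$ and $y$.
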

\begin{proof}
By above, a bilinear operator $A$ lies in
the Jacobi subspace $J \subseteq  (\mathcal{U}(V),\bigtriangleup_a)$
if and only if $A(a,a) = 0.$ Therefore, for every $A$ in a Jacobi
ideal of $\mathcal{U}(V)$ and every $B \in \mathcal{U}(V)$ we have
\[0 = (A\bigtriangleup_a B)(a,a) = A(a,B(a,a)),\]
\[0 = (B\bigtriangleup_a A)(a,a) = (-1)^{AB}A(B(a,a),a) -
(-1)^{AB}A(a,B(a,a))
= (-1)^{AB}A(B(a,a),a),\] whence $A(a,V) = A(V,a) = 0.$ Now, this
relation holds for $B\bigtriangleup_aA$ for every $B \in
\mathcal{U}(V):$
\[0 = (B\bigtriangleup_aA)(a,y) = (-1)^{AB}A(B(a,a),y)\]
for all $y \in V$, and we get $A = 0.$
\end{proof}

\subsection{Terminal Subalgebras of $\mathcal{U}(n,m)$}

In this section we give some examples
 of terminal non-Jordan superalgebras, which are
 subsuperalgebras of $\mathcal{U}(n,m)$.
 These superalgebras are analogs of the simple terminal
 algebras  $W_n,\,S_n,\,H_n$ introduced in \cite{Kantor89}.\smallskip

{\bf\underline{The algebra $W_{n,\,m}.$}} The space of $W_{n,\,m}$
consists of
 all supersymmetric bilinear operations on a vector superspace $V$
 of dimension $n+m$.

It is easy to see that the algebra $M:=W_{n,\,m}$ is terminal: indeed, for supersymmetric operations $A, B$ the multiplication (\ref{U_assmult_2}) specializes exactly to (\ref{terminal_assmult}).



It follows from (\ref{jacobi_3}), (\ref{quasiunit_M}) and (\ref{TAaa}) that the Jacobi subspace
consists of the elements $A\in W_{n,\,m}$ such that $A(a,a)=0$, and
the left quasiunits satisfy the condition $A(a,a)=-a$.

Note also that
the algebra $W_{n,\,m}$ has left units; these are the elements $A\in
W_{n,\,m}$ for which $A(a,x)=-x$ for all $x$.\smallskip

{\bf\underline{The superalgebra $S_{n,\,m}$}} is the subsuperalgebra of $W_{n,\,m}$
 consisting of the bilinear operators $A(x,y)$ such that
 the supertrace of every $T_a$ is zero for all $a$, where $T_a(x)=A(a,x).$
 \smallskip

{\bf\underline{The superalgebra $H_{n,\,m}$}} ($n$ even) is the
 subsuperalgebra of $W_{n,\,m}$ consisting of the bilinear operators
``preserving'' a nondegenerate skew-symmetric bilinear consistent
 superform $\left<\cdot,\cdot \right>$:
 $$ \left<A(x,y),z \right> =(-1)^{yz}
 \left<A(x,z),y \right>.$$

All assertions and calculations made for $W_{n,\,m}$
hold also for $S_{n,\,m}$ and $H_{n,\,m}$,
except that the latter two superalgebras have no left units.

\subsection{The Main Theorem} Let $M$ be a conservative superalgebra on a space $V$ with the
Jacobi subspace $J.$ Consider the space $W,$ which we define as
$W=V/J$ if $M$ has a left quasiunity, and $W=V/J\oplus E$ in the
opposite case, where $E$ is the one-dimensional even space with a
basis element $\epsilon.$

Assume that $M$ possesses a quasiunity. Define the {\it adjoint
mapping} $\operatorname{ad}: M \to \mathcal{U}(W)$ as follows:
\begin{eqnarray} \operatorname{ad}(a)( \alpha, \beta) =
(-1)^{\beta(a+\alpha)} ((\beta * a) * \alpha +
(-1)^{\alpha a}\beta * (\alpha a)-(-1)^{\alpha a} (\beta * \alpha)* a).
\label{23}
\end{eqnarray}
 If $M$ does not have a quasiunity,
 we define the adjoint mapping
$\operatorname{ad}: M \to \mathcal{U}(W)$ by the equation above and
the following equations:
\begin{eqnarray} \operatorname{ad}(a)(\alpha, \epsilon)
= a * \alpha + (-1)^{\alpha a}\alpha a - (-1)^{\alpha a}\alpha * a,
\label{24} \\
\operatorname{ad}(a)(\epsilon, \beta) =
(-1)^{a\beta}\beta * a,\ \operatorname{ad}(a)(\epsilon, \epsilon)=a.
\label{25}
\end{eqnarray}

We check that this mapping is well-defined. Firstly, we prove
that if $\alpha \in J$ or $\beta \in J$ then
$\operatorname{ad}(a)(\alpha,\beta) \in J.$ Indeed, if $\beta \in J,$
then the correctness  follows easily from (\ref{8}). If $\alpha \in
J,$ then by (\ref{9})
\[\operatorname{ad}(a)(\alpha,\beta) \equiv
(-1)^{a(\alpha+\beta)}(-\alpha(\beta*a) +
(-1)^{\alpha\beta}\beta*(\alpha a) + (\alpha\beta)*a) \
(\operatorname{mod} J).\] Now,  (\ref{jacobi_2}) and
(\ref{jacobi_double_comm}) imply
\[[L_\alpha,[L_a,[L_\beta,M]]] = [[L_\alpha,L_a],[L_\beta,M]] +
(-1)^{\alpha a}[L_a,[L_\alpha,[L_\beta,M]]]=\]
\[[L_{\alpha a},[L_\beta,M]] + (-1)^{\alpha a}[L_a,[L_{\alpha \beta},M]]
= -[L_{(-1)^{\beta(\alpha+a)}\beta*(\alpha a) +
(-1)^{a\beta}(\alpha \beta)*a},M].\]
On the other hand,
\[[L_\alpha,[L_a,[L_\beta,M]]] = -(-1)^{a\beta}[L_\alpha,[L_{\beta*a},M]]
= -(-1)^{a\beta}[L_{\alpha(\beta*a)},M].\] Comparing these
expressions and using  (\ref{jacobi_3}), we infer that
$\operatorname{ad}(a)(\alpha,\beta) \in J.$ The correctness of
 (\ref{24}) and (\ref{25}) follows from (\ref{8}) and (\ref{9}).

Note that if $M$ has a quasiunity $e,$ then the uniquely defined
element $\epsilon = e\, (\operatorname{mod} J)$ satisfies (\ref{24})
and  (\ref{25}). Indeed, (\ref{24}) follows from (\ref{10}). To
prove (\ref{25}) we note that
\[\operatorname{ad}(a)(e,\beta) \equiv
(-1)^{a\beta}(-e(\beta*a) + \beta*(ea) + (e\beta)*a) \
(\operatorname{mod} J).\] Now,  (\ref{quasiunit_oper}),
(\ref{quasiunit_double_comm}), and the Jacobi identity allow us to
rewrite the expression $[L_e,[L_a,[L_\beta,M]]]$ in two different
ways:
\[[L_e,[L_a,[L_\beta,M]]] = [[L_e,L_a],[L_\beta,M]] +
 [L_a,[L_e,[L_\beta,M]]]=\]
\[[L_{ea-a},[L_\beta,M]] + [L_a,[L_{e\beta-2\beta},M]] =
-(-1)^{a\beta}[L_{\beta*(ea) + (e\beta)*a - 3\beta*a},M],\]
and
\[[L_e,[L_a,[L_\beta,M]]] = -(-1)^{a\beta}[L_e,[L_{\beta*a},M]] =
-(-1)^{a\beta}[L_{e(\beta*a) - 2(\beta*a)},M].\] Comparing these
expressions, we obtain $\operatorname{ad}(a)(e,\beta) \equiv
(-1)^{a\beta}\beta*a.$

Moreover, it is easy to check that the mapping $\operatorname{ad}$
 does not depend on the associated multiplication $*.$
Indeed, let $*_1$ and $*_2$ be two associated multiplications on
$M.$
 By (\ref{8}) and the inclusion
 $\alpha *_1 \beta -\alpha *_2 \beta\in J$ which holds for all
 $\alpha, \beta \in M$ by (\ref{cons_oper}), we have
\[(\beta *_1 a) *_1 \alpha + (-1)^{\alpha a}\beta *_1 (\alpha a)-
(-1)^{\alpha a} (\beta *_1 \alpha)*_1 a -\]
\[ ((\beta *_2 a) *_2 \alpha + (-1)^{\alpha a}\beta *_2 (\alpha a)-
(-1)^{\alpha a} (\beta *_2 \alpha)*_2 a)\in J\]
 for all  $a, \alpha, \beta \in M$.
 We proceed analogously with (\ref{24}) and  (\ref{25}).

Prove that the adjoint mapping $\operatorname{ad}: M \to
(\mathcal{U}(W),\bigtriangleup_a)$ is a homomorphism for certain
 $a \in W.$ We begin with

\begin{Lem}
Let $(U_{-1},U_0,U_{1})$ be a triple of superspaces. Given some
$($\!superanticommutative$)$ commutators
\[[U_{-1},U_0] \subseteq U_{-1},\ [U_0,U_0] \subseteq U_0,\ [U_1,U_0]
\subseteq U_1,\ [U_{-1},U_1] \subseteq U_0,\] for every $a \in
U_{\pm 1}$ we define an algebra structure $M_a$ on $U_{\mp 1}$
$($\!whose parity coincides  with one of $a)$  by the rule
\[M_a(x,y) = [[a,x],y],\ x, y \in U_{\mp1}.\]
Assume that for the commutators above the Jacobi superidentity holds
whenever defined. Then for every even $a \in U_{-1}$ the mapping
\begin{gather*}
(U_1,M_a) \to (\mathcal{U}(U_{-1}),\bigtriangleup_a),\\
x \mapsto -M_x,
\end{gather*}
is an algebra homomorphism.
\end{Lem}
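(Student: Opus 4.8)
The map $x \mapsto -M_x$ is clearly linear, so the entire content of the lemma is the multiplicativity
\[
(-M_x)\bigtriangleup_a(-M_y) = -M_{M_a(x,y)}\qquad(x,y\in U_1).
\]
Since the Kantor product $\bigtriangleup_a$ is bilinear, the left-hand side equals $M_x\bigtriangleup_a M_y$, so the plan is to prove the single identity $M_x\bigtriangleup_a M_y = -M_{[[a,x],y]}$. Before that I would settle well-definedness: from $[U_{-1},U_1]\subseteq U_0$ and $[U_0,U_1]\subseteq U_1$ (the latter by superanticommutativity from $[U_1,U_0]\subseteq U_1$) one gets $M_x(u,v)=[[x,u],v]\in U_{-1}$ for $u,v\in U_{-1}$, so indeed $M_x\in\mathcal{U}(U_{-1})$, and clearly $p(M_x)=p(x)$, so the map preserves parity.

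The key simplification comes from $a$ being even. Put $h:=[x,a]\in U_0$; then $p(h)=p(x)$ and, by superanticommutativity, $[a,x]=-h$, so that $[[a,x],y]=-[h,y]$ and hence
\[
-M_{[[a,x],y]}(u,v) = -[[[[a,x],y],u],v] = [[[h,y],u],v].
\]
On the other hand, substituting $A=M_x$, $B=M_y$ into the defining formula (\ref{sukant}) of $\bigtriangleup_a$ and unwinding $M_x$, $M_y$ gives, for $u,v\in U_{-1}$,
\[
(M_x\bigtriangleup_a M_y)(u,v) = [h,[[y,u],v]] - (-1)^{xy}[[y,[h,u]],v] - (-1)^{xy+xu}[[y,u],[h,v]].
\]
Everything then reduces to identifying the right-hand side of this last display with $[[[h,y],u],v]$.

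To do that I would apply the graded Jacobi identity $[h,[s,t]]=[[h,s],t]+(-1)^{hs}[s,[h,t]]$ twice to the first summand: once with $s=[y,u]$, $t=v$, and once to the inner bracket $[h,[y,u]]$. Using $p(h)=p(x)$, these two applications produce precisely the terms $(-1)^{xy}[[y,[h,u]],v]$ and $(-1)^{xy+xu}[[y,u],[h,v]]$ alongside $[[[h,y],u],v]$; the first two then cancel the second and third summands above, leaving exactly $[[[h,y],u],v]=-M_{[[a,x],y]}(u,v)$, as required.

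The computation is mechanical once the nested brackets are written out; the only genuine obstacle is the sign bookkeeping, i.e.\ checking that the exponents emitted by the two Jacobi applications coincide with those prescribed by $\bigtriangleup_a$. This matching works because $a$ is even (so no sign arises from commuting past $a$, and $p(h)=p(x)$), which is exactly the hypothesis of the lemma and is what makes the super-cancellation pattern identical to the ungraded one.
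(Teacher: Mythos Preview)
Your argument is correct and is essentially the paper's own proof: both expand $(-M_x)\bigtriangleup_a(-M_y)(u,v)$ and $-M_{M_a(x,y)}(u,v)$ into nested brackets (the paper writes $[[x,a],\cdots]$ where you abbreviate $h=[x,a]$) and then invoke the graded Jacobi identity, with your version making the two Jacobi applications explicit rather than leaving them to the reader. One small slip in your well-definedness paragraph: the relevant containment for $[[x,u],v]\in U_{-1}$ is $[U_0,U_{-1}]\subseteq U_{-1}$ (from $[U_{-1},U_0]\subseteq U_{-1}$), not $[U_0,U_1]\subseteq U_1$; the conclusion is unaffected.
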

\begin{proof}
A direct computation shows that for $b, c \in U_{-1}$ we have
\[((-M_x)\bigtriangleup_a(-M_y))(b,c) = [[x,a],[[y,b],c]] -
(-1)^{xy}[[y,[[x,a],b]],c] - (-1)^{x(y+b)}[[y,b],[[x,a],c]],\]
\[-M_{M_a(x,y)}(b,c) = [[[[x,a],y],b],c],\]
and these expressions are equal by the Jacobi identity.
\end{proof}

\begin{Rem}
It is possible to construct a (unique in a sense)
$\mathbb{Z}$-graded Lie superalgebra $\mathcal{L} =
\sum_{i=-\infty}^{\infty}\mathcal{L}_i$ such that $\mathcal{L}_i =
U_i,\ i = -1, 0, 1$, such that the commutators on $\mathcal{L}_i$
coincide with the commutators on $U_i$ (see \cite{Kantor81}), but we
will not need this superalgebra here.
\end{Rem}

\begin{Th}
Let $M$ be a conservative superalgebra on a vector space $V$ with the
Jacobi subspace $J.$ Let either $W=V/J$ or $W=V/J\oplus
\left<\epsilon\right>$ as above.  The adjoint mapping
$\operatorname{ad}: M \to (\mathcal{U}(W),
\bigtriangleup_{-\epsilon})$ is a homomorphism whose kernel is
 the maximal Jacobi ideal. In particular, if $V$ is
finite-dimensional and $J$ is of codimension $n+m,$ then we have a
homomorphism $\operatorname{ad}: M \to \mathcal{U}(k,m),$ where $k =
n$ if $M$ has a quasiunity and $k = n+1$ otherwise.
\end{Th}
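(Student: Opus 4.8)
The plan is to produce $\operatorname{ad}$ as the homomorphism supplied by the preceding Lemma, applied to a triple $(U_{-1},U_0,U_1)$ manufactured from $M$ itself. I would take $U_{-1}=W$ and $U_1=V$, the underlying space of $M$, and let $U_0$ be the span, closed under commutators, of operators $V_{\alpha,a}:=[\alpha,a]$ with $\alpha\in W,\ a\in V$. The four brackets are prescribed so as to transcribe the two products of $M$: on $U_1=V$ the operator $V_{-\epsilon,a}$ acts as the left multiplication $L_a$ (so that $[[-\epsilon,a],b]=ab$), while on $U_{-1}=W$ the bracket $[[a,\alpha],\beta]$ is forced to equal $-\operatorname{ad}(a)(\alpha,\beta)$ with $\operatorname{ad}$ given by (\ref{23})--(\ref{25}); the brackets $[U_{-1},U_0]\subseteq U_{-1}$ and $[U_1,U_0]\subseteq U_1$ are the resulting actions, and $[U_0,U_0]$ is the commutator. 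That these operators descend to $W=V/J$ is exactly what the computations preceding the Lemma established, using (\ref{8}) and (\ref{9}).

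The main obstacle is to check that this triple meets the hypotheses of the Lemma, i.e., the four bracket inclusions and the super-Jacobi identity whenever all the brackets occurring are defined. This is the sole place where conservativity is used: working tridegree by tridegree, the admissible Jacobi identities (those avoiding the undefined brackets $[U_1,U_1]$ and $[U_{-1},U_{-1}]$) reduce to identities among $\cdot$ and $*$ that are consequences of the degree-4 conservative relation (\ref{cons_oper}), of the module relation $[[L_b,L_a],M]=[L_{b*a-(-1)^{ab}a*b},M]$ derived earlier, and of (\ref{jacobi_double_comm}). I expect this verification to be the longest part of the argument, but entirely mechanical once organized by tridegree.

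With the triple in hand I would apply the Lemma to the even element $a=-\epsilon\in U_{-1}=W$. It yields that $x\mapsto -M_x$ is a homomorphism $(U_1,M_{-\epsilon})\to(\mathcal{U}(W),\bigtriangleup_{-\epsilon})$. By construction $M_{-\epsilon}(a,b)=[[-\epsilon,a],b]=ab$, so the domain $(U_1,M_{-\epsilon})$ is literally $M$, and $-M_a(\alpha,\beta)=-[[a,\alpha],\beta]$ coincides with (\ref{23}) (and with (\ref{24})--(\ref{25}) when $M$ lacks a quasiunity); hence this homomorphism is precisely $\operatorname{ad}$.

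It remains to compute the kernel and the dimensions. Evaluating on $\epsilon$ gives $\operatorname{ad}(a)(\epsilon,\epsilon)\equiv a\ (\operatorname{mod} J)$ --- directly from (\ref{25}) in the non-quasiunity case, and from the identity $\operatorname{ad}(a)(e,\beta)\equiv(-1)^{a\beta}\beta*a$ together with (\ref{10}) in the quasiunity case --- so $\ker\operatorname{ad}\subseteq J$, and being the kernel of a homomorphism it is a Jacobi ideal. Conversely, for any Jacobi ideal $I$ and any $a\in I$, the relations (\ref{8}) and (\ref{9}) combined with the fact that $I$ absorbs products drive every term of (\ref{23})--(\ref{25}) into $J$, whence $\operatorname{ad}(a)=0$; thus $\ker\operatorname{ad}$ contains every Jacobi ideal and is the maximal one. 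Finally, since $\epsilon$ is even we have $\dim W_{\bar 0}=n$ or $n+1$ and $\dim W_{\bar 1}=m$ according as $M$ does or does not possess a quasiunity, giving $\operatorname{ad}\colon M\to\mathcal{U}(k,m)$ with $k=n$ or $k=n+1$, as claimed.
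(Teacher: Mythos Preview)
Your overall strategy—apply the Lemma to a suitable triple—is the paper's strategy too, but the \emph{choice of triple} is different, and this difference is where the work lies. The paper does not take $U_{-1}=W$. It takes
\[
U_{-1}=\mathcal{U}_1(M)=\langle M,\ [L_a,M]:a\in V\rangle,\qquad
U_0=\mathcal{U}_0(M)=\langle L_a,\ [L_a,L_b]:a,b\in V\rangle,\qquad
U_1=V,
\]
with the brackets given by the concrete operator formulas (\ref{oper_comm1}), (\ref{oper_comm2}) and the commutator in $\mathfrak{gl}(V)$. Because these are genuine operator commutators (the $\mathfrak{gl}(V)$-action on $V$ and on $\mathcal{U}(V)$), almost every super-Jacobi identity is automatic; the paper only has to check the single mixed case $A\in\mathfrak{gl}(V),\ B\in\mathcal{U}(V),\ x\in V$. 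It then applies the Lemma with the even element $M\in\mathcal{U}_1(M)$, computes $-M_a$ explicitly on the basis $M,\ [L_\alpha,M]$, and finally transports everything through the linear isomorphism $\bar\psi:W\to\mathcal{U}_1(M)$, $\bar c\mapsto[L_c,M]$, $\epsilon\mapsto -M$, which identifies $(\mathcal{U}(\mathcal{U}_1(M)),\bigtriangleup_M)$ with $(\mathcal{U}(W),\bigtriangleup_{-\epsilon})$.

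Your proposal, by contrast, builds the triple directly over $W$, and this leaves a genuine gap: you have not defined $U_0$ as an honest space of operators. You specify only that $V_{-\epsilon,a}$ acts on $U_1$ as $L_a$; for a general $\alpha\in W$ you never say how $V_{\alpha,a}=[\alpha,a]$ acts on $U_1=V$ (in the paper's picture this operator is $[L_c,L_a]-L_{ca}$ when $\alpha=\bar c$), nor do you verify that two presentations $[\alpha,a]=[\alpha',a']$ in $U_0$ give the same operator on $W\oplus V$. Without this, the ``Jacobi identities reduce to consequences of (\ref{cons_oper})'' claim is not yet a proof—there is no well-defined triple to run the Lemma on. If you complete the definitions you will essentially be reconstructing $\mathcal{U}_0(M)$ and $\mathcal{U}_1(M)$ by hand, and then the tridegree-by-tridegree verification you anticipate becomes the long computation that the paper's operator realisation makes unnecessary. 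Your kernel and dimension arguments at the end are fine and match the paper's.
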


\begin{proof}
As above, take
\begin{eqnarray*}
\mathcal{U}_0(M) &:=& \langle L_a, [L_a,L_b] : a, b \in V
\rangle, \\
\mathcal{U}_1(M) &:=& \langle M, [L_a,M]: a \in V \rangle.
\end{eqnarray*}
Consider the triple of spaces $(\mathcal{U}_1(M),\, \mathcal{U}_0(M),\,
V)$ and define the commutators among them by
(\ref{oper_comm1}), (\ref{oper_comm2}), and the usual commutation in
$\mathfrak{gl}(V)$ (recall that  $\mathcal{U}_0(M)$ is a Lie
superalgebra that acts on $\mathcal{U}_1(M)$ and $V$). Then one can
easily check that the commutators (\ref{oper_comm1}) and
(\ref{oper_comm2}) satisfy the Jacobi super-identity whenever defined:
the only case not checked above is of the double commutator of the elements $A \in
\mathfrak{gl}(V),\, B \in \mathcal{U}(V),\, x \in V,$ and it can be verified
directly.

Therefore, we are under the conditions of the previous lemma. Take
 $M \in \mathcal{U}_1(M)$. We have a homomorphism $M
\to(\mathcal{U}(\mathcal{U}_1(M)),\bigtriangleup_M)$.
 Calculate the bilinear map $-M_a$ explicitly:
\[-M_a([L_{\alpha},M],[L_{\beta},M]) =
-[[a,[L_{\alpha},M]],[L_{\beta},M]]=
-[[L_a,L_{\alpha}]+(-1)^{\alpha a}L_{\alpha a}, [L_{\beta}, M]]=\]
\[-[L_a,[L_{\alpha}, [L_b,M]]] +
(-1)^{\alpha a} [L_{\alpha}, [L_a, [L_{\beta}, M]]] -
(-1)^{\alpha a}[L_{\alpha a}, [L_{\beta}, M]]=\]
\[[L_{-(-1)^{\beta\alpha +
(\beta+\alpha)a}(\beta*\alpha)*a +
(-1)^{a\alpha+a\beta+(a+\beta)\alpha}(\beta*a)*\alpha +
(-1)^{a\alpha+\beta(a+\alpha)}\beta*(\alpha a)},M] =
[L_{\operatorname{ad}(a)(\alpha,\beta)},M],\]
\[-M_a(M,[L_{\beta},M]) = -[[a,M],[L_{\beta},M]] =
[L_a,[L_{\beta},M]] = [L_{(-1)^{\beta a}\beta*a},-M] =
 [L_{\operatorname{ad}(a)(\epsilon,\beta)},-M],\]
\[-M_a([L_{\alpha},M],M) = -[[a,[L_{\alpha},M]],M] =
-[[L_a,L_\alpha]+(-1)^{a\alpha}L_{\alpha a},M] =
[L_{\operatorname{ad}(a)(\alpha,\epsilon)},-M]\]
\[M_a(M,M) = -[[a,M],M] = [L_a,M].\]

Consider the mapping $\psi: V \to \mathcal{U}_1(M)$ given by $x
\mapsto [L_x,M].$ Then (\ref{jacobi_3}) means exactly that
$\ker(\psi) = J.$ Therefore, we have an injective map $\bar{\psi}:
V/J \to \mathcal{U}_1(M).$ Defining, if needed,
$\bar{\psi}(\epsilon) = - M$ (if $M$ has a left quasiunity $e$ then
$[L_e,M] = -M$ by (\ref{quasiunit_M})) we have an isomorphism
between $W$ and $\mathcal{U}_1(M).$ This induces an isomorphism
between $(\mathcal{U}(\mathcal{U}_1(M)),\bigtriangleup_M)$ and
$(\mathcal{U}(W),\bigtriangleup_{-\epsilon})$ given by
(\ref{induced_iso}). Now, by the formulas above
  this homomorphism in composition with the homomorphism
  which was constructed
 above is exactly the adjoint mapping.

Now we show that $\ker(\operatorname{ad}) = I, $ where $I$ is
 the maximal Jacobi
 ideal of $M$. Indeed, if $a \in I,$ then it follows from (\ref{8})
 and  (\ref{9}) that $\operatorname{ad}(a)=0.$

Conversely, if $\operatorname{ad}(a)=0,$ then $a \in J$ by  (\ref{25}).
 Since $\operatorname{ad}$ is a homomorphism,
  $\ker\operatorname{ad} \subseteq I,$ and
  the theorem is proved.\end{proof}

Consider $(\mathcal{U}(V),\bigtriangleup_a)$ for a fixed nonzero
$a\in V$. Check that the adjoint homomorphism, which is applied to
$\mathcal{U}(V)$, is the identity mapping. We know
  that $\mathcal{U}(V)$ has a left unity, and the maximal
Jacobi ideal of $\mathcal{U}(V)$  is zero.
 Therefore, in our case the space $W$ is
$\mathcal{U}(V)/J$ that we identify with $V$ by the mapping $A
\mapsto A(a,a).$ Now, let $A, B, C \in \mathcal{U}(V),$ and let
$B(a,a) = u, C(a,a) = v.$ Show that $\operatorname{ad}(A)(B,C)(a,a)
= A(u,v),$ which establishes the required isomorphism. Recall that
the adjoint mapping does not depend on the choice of associated
multiplication. Therefore, we choose it as (\ref{U_assmult_1}).
Now, a direct computation shows the desired equality:
\[\operatorname{ad}(A)(B,C)(a,a) = \]
\[(-1)^{C(A+B)}((C\bigtriangledown^1_a A)\bigtriangledown^1_a B +
(-1)^{BA}C\bigtriangledown^1_a(B\bigtriangleup_a A) -
(-1)^{BA}(C\bigtriangledown^1_a B)\bigtriangledown^1_a A)(a,a)=\]
\[(-1)^{C(A+B)}\Big((-1)^{B(A+C) + AC}B(a,A(a,C(a,a))) -
(-1)^{BA+C(B+A)}\big(B(a,A(a,C(a,a))) - \]
\[(-1)^{AB}A(B(a,a),C(a,a)) - (-1)^{AB}A(a,B(a,C(a,a)))\big) -
\] \[(-1)^{BA+A(C+B)+CB}A(a,B(a,C(a,a)))\Big) =
A(B(a,a),C(a,a)) = A(u,v). \]
In particular, the equality $\operatorname{ad}= id$ gives another proof
 of the fact that $\mathcal{U}(V)$ has no nonzero Jacobi ideals.



    
    \end{document}